\theoremstyle{plain}
\newtheorem{thm}{Theorem}[section]
\newtheorem{prop}[thm]{Proposition}
\newtheorem{lemma}[thm]{Lemma}
\theoremstyle{definition}
\newtheorem*{defn}{Definition}
\newtheorem*{rem}{Remark}
\DeclareMathOperator{\GL}{GL}
\DeclareMathOperator{\PO}{PO}
\newcommand{\ga}{\gamma}
\newcommand{\Ga}{\Gamma}
\newcommand{\La}{\Lambda}
\newcommand{\set}[1]{\left\{#1\right\}}
\newcommand{\B}[1]{\ensuremath{\mathbf{#1}}}
\newcommand{\Cal}[1]{\ensuremath{\mathcal{#1}}}
\newcommand{\Hy}{\ensuremath{\B{H}}}
\newcommand{\Q}{\ensuremath{\B{Q}}}
\newcommand{\Z}{\ensuremath{\B{Z}}}
\newcommand{\C}{\ensuremath{\B{C}}}
\begin{document}
\bibliographystyle{plain}


\title{Collisions at infinity in hyperbolic manifolds}
\author{D.\ B.\ McReynolds\footnote{Partially supported by NSF grant DMS 1105710}\\ Purdue University \and Alan W.\ Reid\footnote{Partially supported by NSF grants.}\\ University of Texas at Austin \and Matthew Stover\footnote{Partially supported by NSF RTG grant DMS 0602191.}\\ University of Michigan}
\maketitle

\begin{abstract}
For a complete, finite volume real hyperbolic $n$-manifold $M$, we investigate the map between homology of the cusps of $M$ and the homology of $M$. Our main result provides a proof of a result required in a recent paper of Frigerio, Lafont, and Sisto. 
\end{abstract}  


\section{Introduction}

Let $M$ be a cusped finite volume hyperbolic $n$-orbifold. Recall that
the thick part of $M$ is the quotient $M_0 = X_M / \pi_1(M)$, where $X_M$ is
the complement in $\Hy^n$ of a maximal $\pi_1(M)$-invariant collection
of horoballs (see for instance \cite{Ratcliffe}). It is known that $M$ and $M_0$ are
homotopy equivalent and $M_0$ is a compact orbifold with boundary
components $E_1, \dots, E_r$. Each $E_j$ is called a \emph{cusp
  cross-section} of $M$. Since horoballs in $\Hy^n$ inherit a natural
Euclidean metric, each cusp cross-section is naturally a flat $(n -
1)$-orbifold. Changing the choice of horoballs preserves the flat structure up to similarity.

The aim of the present note is to provide a proof of a result required in
Frigerio, Lafont, and Sisto \cite{FLS} for their construction in every $n \geq 4$ of
infinitely many $n$-dimensional graph manifolds that do not support
a locally $\mathrm{CAT}(0)$ metric. Specifically,
the following is our principal result.

\begin{thm}\label{thm:homology-death}
For every $n \geq 3$ and $n > k \geq 2$, there exist infinitely many commensurability classes of orientable non-compact finite volume hyperbolic $n$-manifolds $M$ containing a properly embedded totally geodesic hyperbolic $k$-submanifold $N$ with the following properties. Let $\mathcal E = \set{E_1, \dots, E_r}$ be the cusp cross-sections of $M$ and $\mathcal F = \set{F_1, \dots, F_s}$ the cusp cross-sections of $N$. Then:
\begin{itemize}

\item[(1)] $M$ and $N$ both have at least two ends, and all cusp cross-sections are flat tori;

\item[(2)] the inclusion $N \to M$ induces an injection of $H_{k - 1}(\mathcal F; \Q)$ into $H_{k - 1}(\mathcal E; \Q)$;

\item[(3)] the induced homomorphism from $H_{k - 1}(\mathcal F; \Q)$ to $H_{k - 1}(M; \Q)$ is not an injection.
\end{itemize}
\end{thm}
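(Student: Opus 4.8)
The plan is to realize $M$ and $N$ as arithmetic hyperbolic manifolds of simplest type. Fix a quadratic form $q$ over $\Q$ of signature $(n,1)$ that is $\Q$-isotropic and that admits an orthogonal decomposition $q = q' \perp q''$ with $q'$ of signature $(k,1)$ and $\Q$-isotropic and $q''$ positive definite of rank $n-k$; such $q$ exist for every $n > k \ge 2$, and letting the anisotropic part of $q$ vary produces infinitely many commensurability classes. Let $Y \subset \Hy^n$ be the totally geodesic copy of $\Hy^k$ cut out by the rational subspace $W$ on which $q$ restricts to $q'$, let $\Gamma_0 = \SO^+(q;\Z)$, and put $\Delta_0 = \Stab_{\Gamma_0}(Y)$, a lattice commensurable with $\SO^+(q';\Z)$. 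By Selberg's lemma we may pass to a torsion-free finite-index subgroup, and by separability of geometrically finite subgroups of these arithmetic lattices (Bergeron--Haglund--Wise) to a further finite-index $\Gamma$ so that the induced totally geodesic immersion $N := Y/\Delta \to M := \Hy^n/\Gamma$, with $\Delta = \Gamma \cap \Stab(Y)$, is an embedding; a finite-volume totally geodesic submanifold is automatically properly embedded, and working inside $\SO^+$ keeps everything orientable. It then remains to arrange (1) and (2), after which (3) is automatic.

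I first record why (2) forces (3). Let $N_0$ be the compact core of $N$, an orientable $k$-manifold with $\partial N_0 = F_1 \sqcup \cdots \sqcup F_s$, and set $\omega = [\partial N_0] \in H_{k-1}(\mathcal F;\Q) = \bigoplus_{j} H_{k-1}(F_j;\Q)$, that is, $\omega = \sum_j [F_j]$ with $[F_j]$ the fundamental class of the $(k-1)$-torus $F_j$ in its boundary orientation. Since $k-1 \ge 1$ each $[F_j] \ne 0$, and as the $[F_j]$ lie in different summands, $\omega \ne 0$. On the other hand $\omega = \partial\,[N_0,\partial N_0]$ is the boundary of the relative fundamental class, hence dies in $H_{k-1}(N_0;\Q) = H_{k-1}(N;\Q)$ and therefore in $H_{k-1}(M;\Q)$; the homomorphism in (3) factors through $H_{k-1}(N;\Q)$, so $\omega$ lies in its kernel. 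If (2) holds, the image of $\omega$ in $H_{k-1}(\mathcal E;\Q)$ is nonzero, so $\omega$ is a nonzero kernel element of $H_{k-1}(\mathcal F;\Q)\to H_{k-1}(M;\Q)$, which is (3).

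For (1), a cusp cross-section of an arithmetic hyperbolic manifold is a compact flat orbifold whose orbifold fundamental group embeds with finite index in a Bieberbach group, and passing to a deep enough congruence cover makes every cusp subgroup of $\Gamma$ (and of $\Gamma\cap\Stab(Y)$) free abelian, so all cross-sections of $M$ and $N$ become $(n-1)$- and $(k-1)$-tori; this is standard for cusps of arithmetic manifolds, and we re-establish embeddedness as above. To obtain at least two ends, first choose a finite-index $\Delta'' \le \Delta$ with two cusps (any cusped finite-volume hyperbolic manifold has such a cover), then use separability of $\Delta''$ in $\Delta$ to choose a finite-index $\Gamma' \le \Gamma$ with $\Gamma' \cap \Delta \le \Delta''$; finite covers do not decrease the cusp count, so $N$ gets $\ge 2$ ends, and shrinking $\Gamma'$ (or choosing $q$ so that $M$ already has two cusps) gives $M$ at least two ends.

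The remaining point, (2), is where I expect the real work. Each cusp $F_j$ of $N$ maps into a cusp $E_{c(j)}$ of $M$ as a totally geodesic flat $(k-1)$-subtorus, and the induced map $H_{k-1}(F_j;\Q) = \Q \to H_{k-1}(E_{c(j)};\Q) = \bigwedge^{k-1}\!\big(\pi_1 E_{c(j)} \otimes \Q\big)$ carries $[F_j]$ to the nonzero decomposable $(k-1)$-vector of the sublattice $\pi_1 F_j \subset \pi_1 E_{c(j)}$; hence each individual map $H_{k-1}(F_j;\Q)\to H_{k-1}(\mathcal E;\Q)$ is injective, and (2) can fail only through a \emph{collision}, namely distinct cusps of $N$ landing in one cusp $E_i$ of $M$ with linearly dependent $(k-1)$-vectors (a genuine possibility, e.g.\ three parallel cuspidal tori of $N$ inside one cusp of $M$ when $k=2$). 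The plan is to kill all collisions by one further finite cover making the cusp map $\mathcal F \to \mathcal E$ injective. In the quadratic-form model, cusps of $N$ are $\Delta$-orbits of $q'$-isotropic lines in $W$, cusps of $M$ are $\Gamma$-orbits of $q$-isotropic lines, and cusps of $N$ given by $\ell_1,\ell_2 \subset W$ collide in $M$ exactly when $\ell_2 \in \Gamma\,\ell_1$. There are finitely many $\Delta$-orbits of $q'$-isotropic lines in $W$, hence finitely many colliding pairs; for $\ell_2 = g\,\ell_1$ representing distinct cusps one has $g \notin \Delta\cdot\Stab_\Gamma(\ell_1)$, and separability of the relevant (virtually abelian, geometrically finite) parabolic data produces a finite quotient of $\Gamma$ in which $g$ leaves the image of $\Delta\cdot\Stab_\Gamma(\ell_1)$, i.e.\ a finite cover in which those two cusps, and any pair split off from them, no longer collide. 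Intersecting the finitely many such covers, while verifying that no new collisions among the split cusps are created — the main bookkeeping — yields a cover with $\mathcal F \to \mathcal E$ injective, so each $H_{k-1}(F_j;\Q)$ embeds into a distinct summand of $H_{k-1}(\mathcal E;\Q)$ and (2) follows. The crux of the whole argument is thus this separability and no-new-collision control over the cusp map $\mathcal F \to \mathcal E$.
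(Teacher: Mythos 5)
Your overall architecture matches the paper's: arithmetic (simplest-type) examples supply the infinitely many commensurability classes, congruence covers give torus cusps and extra ends for (1), and (3) follows from (2) because $\sum_j [F_j]$ is nonzero in $H_{k-1}(\mathcal F;\Q)$ but bounds the compact core of $N$. Those parts are fine. The gap is exactly where you say "the real work" is: your mechanism for removing collisions. You reduce to finding, for each colliding pair with $\ell_2 = g\ell_1$ and $g \notin \Delta\cdot\Stab_\Gamma(\ell_1)$, a finite quotient of $\Gamma$ in which $g$ stays outside the image of $\Delta\cdot\Stab_\Gamma(\ell_1)$. But $\Delta\cdot\Stab_\Gamma(\ell_1)$ is a \emph{double coset} of two infinite subgroups (a nonuniform lattice in $\SO(k,1)$ and a maximal parabolic), not a subgroup, and separating an element from such a product set in the profinite topology is double coset separability --- a much stronger property than the subgroup separability you invoke, which does not follow from it and is not an off-the-shelf fact for these lattices. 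You neither prove it nor cite a result that gives it, and you yourself flag the subsequent "no new collisions" bookkeeping as unresolved. (That second worry is actually moot: if every auxiliary cover is chosen to contain $\Delta = \pi_1(N)$, e.g.\ as $\phi^{-1}(\phi(\Delta))$, then the cusps of $N$ never split and passing to deeper covers only removes collisions. But the double coset separability itself remains unjustified.)

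The paper avoids this entirely via the \emph{virtual retract property}: by Bergeron--Haglund--Wise, for these arithmetic examples some finite-index $\Gamma_N \leq \Gamma$ containing $\Delta$ admits a retraction $\rho\colon \Gamma_N \to \Delta$. Then if $g \in \Gamma_N$ conjugates one peripheral subgroup of $\Delta$ to another, applying $\rho$ shows $\rho(g) \in \Delta$ already does so, i.e.\ conjugacy in $\Gamma_N$ implies conjugacy in $\Delta$, so no collisions survive in $\Hy^n/\Gamma_N$; separability of the retract $\Delta$ then gives embeddedness by Scott's criterion. If you want to salvage your route, the cleanest fix is to import this retraction argument (it is a one-line replacement for your separability step); otherwise you must actually establish double coset separability of $\Delta\cdot\Stab_\Gamma(\ell_1)$, which is the hard content you have assumed. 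One small point in your favor: you correctly note that injectivity in (2) requires ruling out linearly dependent $(k-1)$-classes from distinct cusps of $N$ landing in one $E_i$, not merely ruling out free homotopy, and the non-collision statement handles this since distinct non-conjugate rank-$(k-1)$ sublattices of $\pi_1(E_i)$ with proportional primitive decomposable classes would coincide.
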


That every finite volume hyperbolic $n$-orbifold has a finite covering for which (1) holds is a folklore result for which we give a complete proof in $\S$\ref{sec:tori} (see Proposition \ref{prop:torus-cusps}). A proof for $k=2$, the case needed in \cite{FLS}, is given in Chapter 12 of \cite{FLS} assuming that (1) is known. However, our proof yields the more general result stated above. The main difficulty in proving Theorem \ref{thm:homology-death} is (2), which will follow from a separation result that requires some terminology.

Consider a totally geodesic hyperbolic $k$-manifold $N$ immersed in
$M$. Assume that $N$ is also noncompact with cusp cross-sections $F_1,
\dots, F_s$. Though they are not freely homotopic in $N$, it is
possible that two distinct ends of $N$ become freely homotopic inside
$M$. When this occurs, we say that the two ends of $N$ \emph{collide
  at infinity} inside $M$. For certain $N$, it is a well-known consequence
of separability properties of $\pi_1(N)$ in $\pi_1(M)$ (e.g., see
\cite{Be} and \cite{Long--Reid}) that one can find a finite covering $M'$ of $M$
into which $N$ embeds. However, $N$ may still have collisions at infinity
in $M'$ and such collisions can lead to the continued failure of (2). Removing these collisions is the content of the next result; see \S 2 for the definition of virtual retractions.

\begin{thm}\label{thm:remove-collisions}
  Suppose $M$ is a cusped finite volume hyperbolic $n$-manifold and
  $N$ is an immersed totally geodesic cusped hyperbolic $k$-manifold. If $\pi_1(M)$ virtually retracts $\pi_1(N)$, then there exists a finite covering $M'$ of $M$ such that $N$
  embeds in $M'$ and has no collisions at infinity.
\end{thm}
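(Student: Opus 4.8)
The plan is to use the virtual retraction hypothesis to separate the "colliding" cusp subgroups from one another inside a suitable finite-index subgroup of $\pi_1(M)$. First I would set up the combinatorial data: fix a horoball packing for $M$, let $P_1, \dots, P_r \leq \pi_1(M)$ be representatives of the conjugacy classes of maximal peripheral subgroups (virtually $\Z^{n-1}$), and let $Q_1, \dots, Q_s \leq \pi_1(N)$ be the peripheral subgroups of $N$. Since $N$ is totally geodesic, each $Q_i$ is contained in a conjugate $g_i P_{c(i)} g_i^{-1}$ of some peripheral subgroup of $M$; here $c \colon \{1, \dots, s\} \to \{1, \dots, r\}$ records which cusp of $M$ the $i$th cusp of $N$ maps into. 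A collision at infinity between the ends $i$ and $j$ of $N$ corresponds precisely to the situation where $c(i) = c(j)$ and, up to changing $g_i, g_j$, the subgroups $Q_i$ and $Q_j$ are conjugate into the \emph{same} coset translate of $P_{c(i)}$ at the level of $\pi_1(M)$; more precisely, $N \to M$ has a collision iff two distinct ends of $N$ have peripheral subgroups that are $\pi_1(M)$-conjugate. So the goal is to pass to a finite cover killing all such conjugacies while simultaneously making $N$ embed.

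The key steps, in order: (i) Use the hypothesis that $\pi_1(M)$ virtually retracts $\pi_1(N)$ — so there is a finite-index $H \leq \pi_1(M)$ containing $\pi_1(N)$ and a retraction $\rho \colon H \to \pi_1(N)$ — and replace $M$ by the cover corresponding to $H$; after this $N$ embeds (this is the standard Long–Reid/Bergeron argument already alluded to in the excerpt), so we may as well assume $N$ already embeds in $M$ with $\rho \colon \pi_1(M) \to \pi_1(N)$ a retraction. (ii) For each unordered pair $\{i,j\}$ with $i \ne j$ that collides — i.e., there is $x \in \pi_1(M)$ with $x Q_i x^{-1} \cap Q_j$ of finite index in both (equivalently, commensurable, equivalently after adjusting $x$, $x Q_i x^{-1} = Q_j'$ for finite-index subgroups) — I want to find a finite-index subgroup $K_{ij} \leq \pi_1(M)$ such that no $\pi_1(K_{ij})$-conjugate of (the induced finite-index peripheral subgroup at end $i$) meets (that at end $j$). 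The mechanism: apply $\rho$. Since $\rho$ restricted to $\pi_1(N)$ is the identity, the images $\rho(P_k)$ are "small" (the peripheral subgroups of $M$ map under $\rho$ to cyclic or rank-$\le (k-1)$ abelian subgroups determined by which cusp of $N$, if any, they touch). The crucial point is that a collision $x Q_i x^{-1} = Q_j$ in $\pi_1(M)$ would, after applying $\rho$, force $\rho(x) Q_i \rho(x)^{-1}$ and $Q_j$ to be conjugate \emph{in $\pi_1(N)$} — but $Q_i$ and $Q_j$ are \emph{not} conjugate in $\pi_1(N)$ (distinct ends of $N$), a contradiction, \emph{unless} $\rho(x)$ fails to conjugate them, which it always will once we arrange things correctly. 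More carefully: I would use residual finiteness and cusp-separability (subgroup separability of peripheral subgroups, which holds for these lattices — they are LERF on geometrically finite subgroups by Bergeron–Haglund–Wise, or more elementarily one uses that peripheral subgroups are separable) to find, for each colliding pair, a finite-index subgroup of $\pi_1(M)$ in which the two peripheral subgroups become non-conjugate; the retraction is the device that certifies such a subgroup exists because $\rho$ already "sees" the two ends of $N$ as distinct. (iii) Intersect the finitely many $K_{ij}$ together with $H$ to get $\pi_1(M')$; the corresponding cover $M' \to M$ has $N$ embedded (since $\pi_1(N) \leq \pi_1(M')$ and we never left $H$) and, by construction, no two distinct ends of $N$ are freely homotopic in $M'$ — i.e., no collisions at infinity. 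One must also check that the finitely many \emph{self}-collisions (an end of $N$ colliding with a \emph{different} lift of itself, i.e., $xQ_ix^{-1} = Q_i$ with $x \notin$ the obvious normalizer) are handled by the same argument: here one uses that the retraction $\rho$ sends the relevant $x$ into $\pi_1(N)$ and $\rho(x)$ would have to normalize $Q_i$, which combined with separability lets us excise the bad coset.

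The main obstacle I expect is step (ii): cleanly extracting, from the single retraction $\rho$, a finite-index subgroup that separates \emph{all} the finitely many colliding pairs of peripheral subgroups at once while keeping $\pi_1(N)$ inside. The subtlety is that "colliding at infinity" is a statement about conjugacy of peripheral \emph{subgroups} (up to commensurability), not of elements, so the usual conjugacy-separability or $\pi_1$-separability lemmas do not apply verbatim; one needs a version of double-coset separability or the separability of peripheral subgroups together with a counting/pigeonhole argument on the finitely many $\pi_1(N)\backslash \pi_1(M) / P_k$ double cosets that support collisions. The retraction $\rho$ is exactly what makes this tractable: composing with $\rho$ collapses $\pi_1(M)$ onto $\pi_1(N)$, inside which the ends are genuinely distinct and the flat cusp cross-sections are honestly embedded, so the preimage under $\rho$ of an appropriate finite-index subgroup of $\pi_1(N)$ that already separates the $Q_i$'s (obtained from residual finiteness of $\pi_1(N)$ applied to the finitely many nontrivial coset representatives witnessing the "would-be" collisions) does the job. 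The remaining work is bookkeeping: verifying that peripheral structure is inherited correctly in covers and that "$N$ embeds with no collisions" is equivalent to the subgroup-theoretic statement we have arranged.
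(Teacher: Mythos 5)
Your core argument is the paper's: the one-line computation $S_1 = \rho(S_1) = \rho(gS_2g^{-1}) = \rho(g)S_2\rho(g)^{-1}$ (the paper isolates this as Lemma \ref{lem:retract-conjugacy}) shows that two peripheral subgroups of $\La = \pi_1(N)$ conjugate in the retracting subgroup $\Ga_N$ are already conjugate in $\La$, and since distinct ends of $N$ have non-conjugate peripheral subgroups in $\La$, there are no collisions in $\Hy^n/\Ga_N$. You state exactly this as "the crucial point" in step (ii), but then you hedge ("unless $\rho(x)$ fails to conjugate them\dots") and wrap it in machinery the argument does not need: the per-pair subgroups $K_{ij}$, double-coset separability, pigeonhole on double cosets, and preimages under $\rho$ of finite-index subgroups of $\pi_1(N)$ can all be deleted --- the identity above is already the complete proof of the non-collision claim, uniformly over all pairs, with no further passage to subgroups. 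The "main obstacle" you anticipate in (ii) is therefore not an obstacle. The one place you are genuinely imprecise is step (i): passing to the cover corresponding to $H = \Ga_N$ does not by itself make $N$ embed; rather, the retraction implies $\La$ is separable in $\Ga_N$ (Lemma 9.2 of \cite{HW}), and Scott's criterion then yields a \emph{further} finite cover $M''$ of $M'$ in which $N$ embeds. The correct order is the paper's: first kill collisions in $M' = \Hy^n/\Ga_N$ via the retraction, then pass deeper to embed --- collisions cannot reappear in $M''$ since any conjugator in $\pi_1(M'')$ lies in $\Ga_N$.
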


That there are infinitely many commensurability classes of
manifolds for which Theorem \ref{thm:remove-collisions} applies is discussed in the remark at the end of \S 2.  In
particular, any noncompact arithmetic hyperbolic $n$-manifold has the required
property, and these manifolds determine infinitely many commensurability classes in every dimension $n>2$. Given that there are infinitely many commensurability classes to which Theorem \ref{thm:remove-collisions} applies, we now assume Theorem \ref{thm:remove-collisions} and (1) and prove (2) and (3) of Theorem \ref{thm:homology-death}.

\begin{proof}[Proof of (2) and (3) of Theorem \ref{thm:homology-death}]
  Let $M$ and $N$ satisfy the conditions of Theorem
  \ref{thm:remove-collisions} and assume that $M$ satisfies (1). We replace $M$ with the covering $M'$
  satisfying the conclusions of Theorem \ref{thm:remove-collisions} and let $\Cal{E}' = \{E_j'\}$ be the set of cusp ends of $M'$. Note that $M'$ also satisfies (1). Now, each $(k - 1)$-torus $F_j$ is realized as an
  embedded homologically essential submanifold of some $(n - 1)$-torus
  $E_{i(j)}'$. In particular, (3) is
immediate as the homology class $\sum [F_j]$ bounds the class
$[N_0] = [N \cap M'_0]$ inside $M'$. Since $F_j$ is not freely homotopic to $F_k$ in $M'$
for any $j \neq k$ , it follows that they cannot be freely homotopic
in $\mathcal E'$ for any $j \neq k$. Hence the induced map on
$(k - 1)$-homology is an injection, which gives (2). This completes the proof.
\end{proof}

\noindent{\bf Acknowledgements:}~We thank Jean Lafont for stimulating conversations and for drawing our attention to the basic questions addressed in this article.


\section{The proof of Theorem \ref{thm:remove-collisions}}\label{sec:main-thm}

The proof of Theorem \ref{thm:remove-collisions} is an easy consequence of the \emph {virtual retract property}  of \cite{LRRetract} (see also \cite{BHW}) which has found significant applications in low-dimensional topology and geometric group theory of late (see \cite{LRRetract}, \cite{BHW} and the references therein).

\begin{defn}
  Let $G$ be a group and $H < G$ be a subgroup. Then $G$
\emph{virtually retracts} onto $H$ if there exists a finite index subgroup
  $G' < G$ with $H < G'$ and a homomorphism $\rho\colon G' \to H$ such that
  $\rho|_H = \mathrm{id}_H$. In addition we say that $G'$ \emph{retracts}
onto $H$, and $\rho$ is called the \emph{retraction homomorphism}. \end{defn}

With this definition we note the following lemma.

\begin{lemma}\label{lem:retract-conjugacy}
Let $G$ be a group and $H < G$ a subgroup such that $G$ retracts onto $H$. Then two subsets $S_1, S_2$ of $H$ are conjugate in $G$ if and only if they are conjugate in $H$.
\end{lemma}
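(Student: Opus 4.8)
The plan is to prove the two directions of the biconditional, noting that the "if" direction is trivial: if $S_1$ and $S_2$ are conjugate in $H$, then since $H < G$ they are automatically conjugate in $G$ by the same element. So the content is entirely in the "only if" direction, and here the retraction $\rho \colon G \to H$ with $\rho|_H = \mathrm{id}_H$ does all the work.

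For the "only if" direction, suppose $g \in G$ satisfies $g S_1 g^{-1} = S_2$. The key observation is that applying $\rho$ to this equation gives $\rho(g) \rho(S_1) \rho(g)^{-1} = \rho(S_2)$, and since $S_1, S_2 \subseteq H$ and $\rho$ restricts to the identity on $H$, we have $\rho(S_i) = S_i$ for $i = 1, 2$. (Here I am using that $\rho$ is a homomorphism, so it commutes with conjugation and with the formation of images of subsets elementwise.) Therefore $\rho(g) S_1 \rho(g)^{-1} = S_2$, and since $\rho(g) \in H$, the subsets $S_1$ and $S_2$ are conjugate in $H$ via the element $\rho(g)$. This completes the argument.

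I do not anticipate any genuine obstacle here; the lemma is a purely formal consequence of having a retraction onto $H$, and the proof is a two-line diagram chase. The one point worth being slightly careful about is the distinction between the hypothesis "$G$ retracts onto $H$" (which is what is used) versus "$G$ virtually retracts onto $H$": in the latter case one only gets conjugacy in $G'$ implying conjugacy in $H$, and elements of $G \setminus G'$ could in principle conjugate $S_1$ to $S_2$ without any element of $H$ doing so. Since the lemma as stated assumes an honest retraction, this subtlety does not arise, but it is the reason the companion statement in Theorem \ref{thm:remove-collisions} must first pass to a finite-index subgroup before applying this lemma. In the application, the subsets $S_1, S_2$ will be the (finitely generated) cusp subgroups of $\pi_1(N)$, and "conjugate" will be used to detect free homotopy of the corresponding ends.
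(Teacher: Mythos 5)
Your proof is correct and is essentially identical to the paper's: apply the retraction homomorphism $\rho$ to the conjugation equation and use that $\rho$ fixes $S_1, S_2 \subseteq H$ elementwise, so $\rho(g) \in H$ realizes the conjugacy. Your side remark about why the virtual version requires first passing to the finite-index subgroup matches how the lemma is deployed in the proof of Theorem \ref{thm:remove-collisions}.
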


\begin{proof}
One direction is trivial. Suppose that there exists $g \in G$ such that $S_1 = g S_2 g^{-1}$. Then
\[
S_1 = \rho(S_1) = \rho(g S_2 g^{-1}) = \rho(g) S_2 \rho(g)^{-1},
\]
so $S_1$ and $S_2$ are conjugate in $H$.
\end{proof}

\begin{proof}[Proof of Theorem \ref{thm:remove-collisions}]
  Let $M = \Hy^n / \Ga$ be a cusped finite volume hyperbolic
  $n$-manifold, $N = \Hy^k / \La$ be a noncompact finite volume
  totally geodesic hyperbolic $k$-manifold immersed in $M$ such that
  that $\Ga$ virtually retracts onto $\La$. Let $F_1, \dots, F_r$ be
  the cusp cross-sections of $N$ and $\Delta_1, \dots, \Delta_r < \La$
  representatives for the associated $\La$-conjugacy classes of
  peripheral subgroups, i.e., $\Delta_j = \pi_1(F_j)$.

Two ends $F_{j_1}$ and $F_{j_2}$ of $N$ collide at infinity in $M$ if
and only if any two representatives $\Delta_{j_1}$ and $\Delta_{j_2}$
for the associated $\La$-conjugacy classes of peripheral subgroups are
conjugate in $\Ga$ but not in $\La$. Let $\Ga_N$ denote the finite
index subgroup of $\Ga$ that retracts onto $\La$, and $\rho\colon \Ga_N
\to \La$ the retracting homomorphism. By Lemma \ref{lem:retract-conjugacy}, $\Delta_{j_1}$ and
$\Delta_{j_2}$ are not conjugate in $\Ga_N$ for any $j_1 \neq
j_2$. Thus $N$ has no collisions at infinity inside $M' = \Hy^n /
\Ga_N$.

Moreover, since $\La$ is a retract of $\Ga_N$, it follows that $\La$ is
separable in $\Ga_N$ (see Lemma 9.2 of \cite{HW}). Now a well-known result
of Scott \cite{Sc} shows that we can pass to a further covering $M''$ of $M'$ such
that the immersion of $N$ into $M'$ lifts to an embedding in $M''$. This proves
the theorem.
\end{proof}

\begin{rem}\
\begin{enumerate}

\item Examples where the virtual retract property holds are
  abundant. From \cite{BHW}, if $M=\Hy^n/\Gamma$ is any non-compact
  finite volume hyperbolic $n$-manifold, which is arithmetic or arises
  from the construction of Gromov--Piatetskii-Shapiro, then $\Gamma$
  has the required virtual retract property. Briefly, the arithmetic case follows from Theorem 1.4 of \cite{BHW}
  and the discussion at the very end of \S 9 of \cite{BHW}, and for the
  examples from the Gromov--Piatetskii-Shapiro construction it follows
  from Theorem 9.1 of \cite{BHW} and the same discussion at the very
  end of in \S 9.

\item We have in fact shown something stronger, namely that two essential loops in a cusp cross-section $F_j$ of $N$ are homotopic inside $M'$ if and only if they are freely homotopic in $N$. Therefore, the kernel of the induced map from $H_*(\mathcal F; \Q)$ to $H_*(M'; \Q)$ is precisely equal to the kernel of the homomorphism from $H_*(\mathcal F; \Q)$ to $H_*(N; \Q)$.

\item Lemma \ref{lem:retract-conjugacy} also implies that $N$ cannot have positive-dimensional essential self-intersections inside $M'$. In particular, if $n < 2 k$, then $N$ automatically embeds in $M'$.

\end{enumerate}
\end{rem}


\section{Covers with torus ends}\label{sec:tori}

The following will complete the proof of Theorem \ref{thm:homology-death}.

\begin{prop}\label{prop:torus-cusps}
  Let $M$ be a complete finite volume cusped hyperbolic
  $n$-manifold. Then $M$ has a finite covering $M'$ such that $M'$ has
  at least two ends and each cusp cross-section is a flat $(n -
  1)$-torus.
\end{prop}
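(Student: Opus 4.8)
The plan is to produce the covering in two stages: first arrange that every cusp cross-section becomes a torus, then arrange that there are at least two ends. For the first stage, recall that each cusp cross-section $E_j$ of $M$ is a closed flat $(n-1)$-orbifold, so $\pi_1(E_j)$ is a crystallographic group, i.e., it fits in an exact sequence $1 \to \Z^{n-1} \to \pi_1(E_j) \to \Phi_j \to 1$ with $\Phi_j$ finite (the holonomy group) and $\Z^{n-1}$ the maximal abelian normal subgroup (the translation lattice). A cusp cross-section is a torus exactly when the holonomy is trivial. The standard tool is Selberg's lemma: since $\pi_1(M) \leq \GL(n+1,\R)$ (or into $\mathrm{Isom}(\Hy^n) = \mathrm{O}^+(n,1)$) is finitely generated and linear, it is virtually torsion-free; more usefully, by a congruence-subgroup-type argument one can produce a finite-index normal subgroup $\Ga' \trianglelefteq \pi_1(M)$ whose intersection with each peripheral subgroup $\pi_1(E_j)$ lies inside the translation lattice $\Z^{n-1}_j$. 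Concretely, I would embed $\pi_1(M)$ in $\GL(m,\Z[1/q])$ for suitable $m,q$ after conjugating into a nice form, reduce mod a prime $p$ not dividing $q$ and with $p$ larger than all the orders $|\Phi_j|$, and take $\Ga'$ to be the kernel; since a torsion element of a crystallographic group that maps trivially to the holonomy must be trivial, and reduction mod $p$ kills no nontrivial element of order coprime to $p$ in the image, one checks $\Ga' \cap \pi_1(E_j) \subseteq \Z^{n-1}_j$. Then in $M' = \Hy^n/\Ga'$ every cusp cross-section is a flat manifold with trivial holonomy, hence a flat $(n-1)$-torus.

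For the second stage — at least two ends — I would use the fact that $M$ (hence $M'$) already has at least one cusp, so $\pi_1(M')$ contains a peripheral subgroup $P \cong \Z^{n-1}$ corresponding to a cusp $E$. The goal is a finite cover in which this cusp unwraps into at least two cusps, i.e., a finite-index subgroup $\Ga'' \leq \Ga' = \pi_1(M')$ for which the double-coset space $P \backslash \Ga' / \Ga''$ has at least two elements, equivalently $[\Ga' : \Ga''] > [P : P \cap \Ga'']$. The cleanest way: pass to a finite cover that is a nontrivial regular cover, e.g., use that $\pi_1(M')$ is a lattice in $\mathrm{O}^+(n,1)$ which is residually finite and, being non-cocompact, has a nonabelian free quotient or at least surjects onto a nontrivial finite group $Q$ in which the image of $P$ is proper (since $P$ has infinite index in $\Ga'$, some finite quotient separates a coset of $P$ from the identity; more carefully, because $M'$ is a manifold with boundary and $b_1 > 0$ or one can use the LERF-type separability of peripheral subgroups — indeed $P$ is separable in $\Ga'$ by Scott's theorem as used above — there is a finite-index $\Ga''$ with $P \leq \Ga''$ but $\Ga'' \neq \Ga'$). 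Then the cusp $E$ lifts to $[\Ga' : \Ga'']/[\Ga''\cap P : P] \cdot (\text{something}) \geq \ldots$; the point is simply that a proper finite-index subgroup containing $P$ gives a cover in which $E$ has more than one preimage as soon as the cover is disconnected along the cusp — and since one can choose $\Ga''$ not containing any $\Ga'$-conjugate of $P$ other than those forced, the cusp splits. Being careful: I would choose $\Ga''$ by first taking a surjection $\Ga' \to \Z/N$ nontrivial on $\Ga'$ but trivial on $P$ (possible because $P$ has infinite index so $\mathrm{Hom}(\Ga',\Z/N)$ has an element vanishing on the finitely generated $P$ once $N$ is large, unless $P$ is "full" — but $P \cong \Z^{n-1}$ has rank $n-1 < $ potentially $b_1(M')$; in the worst case where this fails, use instead that $M'$ has a finite cover with $b_1$ large, by Lubotzky's work or by passing to congruence covers). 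This regular cover $M''$ then has at least $N$ preimages of $E$ if $P$ maps trivially, giving at least two ends; and regular covers preserve the property that all cusp cross-sections are tori (a cover of a torus is a torus).

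The main obstacle I anticipate is the torus stage, specifically making the congruence/reduction argument uniformly kill holonomy across all cusps at once while keeping the subgroup finite-index and normal — one must be slightly careful that the linear representation can be arranged to have entries in a ring where reduction modulo a suitable prime detects all the relevant torsion, and that a single prime works for all finitely many cusps (this is fine: choose $p$ coprime to all the relevant denominators and larger than $\max_j |\Phi_j|$). A secondary subtlety in the second stage is ensuring the new cover is \emph{connected} and genuinely has $\geq 2$ cusps rather than just more fundamental domains of one cusp; this is handled by choosing the quotient $\Ga' \to Q$ so that the image of $P$ is a proper subgroup of $Q$, which is always possible by residual finiteness applied to an element of $\Ga' \setminus P$ together with separability of $P$ (Scott, already invoked in the proof of Theorem~\ref{thm:remove-collisions}), or more simply by a homological argument since $H_1(M';\Q) \neq 0$ for any cusped finite-volume hyperbolic manifold and the cusp subgroups cannot span all of homology when there are enough cusps — and when they do, one first passes to a cover increasing $b_1$. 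Everything else — that a finite cover of a flat torus is a flat torus, that totally geodesic submanifolds and cusps behave well under covers — is routine. I would organize the writeup as: (a) Selberg-type lemma producing torus cusps; (b) a short lemma that a cusped finite-volume hyperbolic manifold has a finite cover with $\geq 2$ cusps, proven via separability of a peripheral subgroup; (c) combine, taking a common finite cover (intersection of the two finite-index subgroups, which is again finite-index and still has all-torus cusps since it refines the torus cover, and still has $\geq 2$ ends since it refines the multi-cusp cover).
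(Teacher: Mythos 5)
Your overall architecture matches the paper's: conjugate $\pi_1(M)$ into a number ring (this uses Weil local rigidity for $n\geq 3$, which you should cite rather than just ``a nice form''), kill the cusp holonomy with a congruence kernel, and then find a further cover splitting a cusp. One imprecision in your first stage: peripheral subgroups of a hyperbolic \emph{manifold} are torsion-free Bieberbach groups, so the elements you must keep out of the congruence kernel are not torsion elements but infinite-order screw motions with nontrivial rotational part; the correct detection mechanism is that such an element has an eigenvalue which is a nontrivial root of unity of order dividing $\abs{\Phi_j}$, hence cannot reduce to the identity modulo a prime of large residue characteristic. (The paper's own treatment chooses finitely many lifts of the holonomy elements and uses normality of the congruence kernel, which also needs this point; your ``torsion'' framing, taken literally, proves nothing because there is no torsion to kill.)

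The genuine gap is in your second stage. Your double-coset count is fine: a \emph{proper} finite-index subgroup $\Ga''<\Ga'$ containing the peripheral subgroup $P$ does force at least two cusps over $E$, since the cusp corresponding to the trivial double coset has degree $1<[\Ga':\Ga'']$. But none of your proposed reasons for the existence of such a $\Ga''$ (equivalently, of a finite quotient $\rho\co\Ga'\to Q$ with $\rho(P)\neq Q$) is valid in the stated generality. Residual finiteness plus infinite index of $P$ does not give a finite quotient in which $P$ has proper image (infinite-index subgroups can be profinitely dense). Separability of $P$ is not ``Scott's theorem'' (that concerns surface groups), and it is not available for an arbitrary cusped hyperbolic $n$-manifold; in this paper separability is only deduced from the virtual-retract hypothesis of Theorem \ref{thm:remove-collisions}, which Proposition \ref{prop:torus-cusps} does not assume. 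Likewise $b_1(M')>0$, the existence of a class of $H^1$ vanishing on $P$, and the ``pass to a cover with large $b_1$'' fallback are all unjustified outside the arithmetic setting. The missing observation --- and the reason the torus stage must come first --- is that after stage one every peripheral subgroup is \emph{abelian}, so any nonabelian finite quotient of $\Ga'$ works; such quotients exist because $\Ga'$ is a finitely generated linear nonabelian group, hence residually finite (the paper uses the stronger fact, via strong approximation, that the congruence quotients $\Ga/\Ga(\mathfrak p)$ are central extensions of nonabelian finite simple groups). Replacing your stage-two justifications with this one line closes the gap.
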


\begin{proof}[Proof of Theorem \ref{prop:torus-cusps}]
Let $M = \Hy^n/\Gamma$ be a cusped hyperbolic $n$-manifold of finite volume. Let $\Delta_1,\dots,\Delta_{r_j}$ be representatives for the conjugacy classes of peripheral subgroups of $\Ga$. For each $\Delta_j$ the Bieberbach Theorem \cite[$\S$7.4]{Ratcliffe} gives a short exact sequence
\[ 1 \to \Z^{n - 1} \to \Delta_j \to \Theta_j \to 1 \]
where $\Theta_j$ is finite. Then $E_j$ is a flat $(n - 1)$-torus if and only if $\Theta_j$ is the trivial group. Note in the case when $M$ is a surface, the statement is trivial and thus we will assume $n>2$.

Let $\gamma_{j,1}, \dots, \gamma_{j,r_j}$ be lifts of the distinct nontrivial elements of $\Theta_j$ to $\Delta_j$.  Since $n \geq 3$, it is a well-known consequence of Weil Local Rigidity (see \cite[Thm.~7.67]{Raghunathan}) that we can conjugate $\Ga$ in $\PO_0(n, 1)$ inside $\GL_N(\C)$ so that it has entries in some number field $k$. Since $\Ga$ is finitely generated, we can further assume that it has entries in some finitely generated subring $R \subset k$. Then $R / \mathfrak p$ is finite for every prime ideal $\mathfrak p \subset R$.

This determines a homomorphism from $\Ga$ to $\GL_N(R / \mathfrak p)$. For every $\gamma_{j, k}$, the image of $\gamma_{j, k}$ in the finite group $\GL_N(R / \mathfrak p)$ is nontrivial for almost every prime ideal $\mathfrak p$ of $R$. Indeed, any off-diagonal element is congruent to zero modulo $\mathfrak p$ for only finitely many $\mathfrak p$ and there are only finitely many $\mathfrak p$ so that a diagonal element is congruent to $1$ modulo $\mathfrak p$. Since there are finitely many $\gamma_{j, k}$, this determines a finite list of prime ideals $\mathcal P = \set{\mathfrak p_1, \dots, \mathfrak p_s}$ such that $\gamma_{j, k}$ has nontrivial image in $\GL_N(R / \mathfrak p)$ for any $\mathfrak p \notin \mathcal P$ and every $j, k$. If $\Ga(\mathfrak p)$ is the kernel of this homomorphism, then $\Ga(\mathfrak p)$ contains no conjugate of any of the $\gamma_{j, k}$.

The peripheral subgroups of $\Ga(\mathfrak p)$ are all of the form $\Ga(\mathfrak p) \cap \ga \Delta_j \ga^{-1}$ for some $\ga \in
\Ga$. Since no conjugate of any $\ga_{j, k}$ is contained in
$\Ga(\mathfrak p)$, we see that $\Ga(\mathfrak p) \cap \ga \Delta_j
\ga^{-1}$ is contained in the kernel of the above homomorphism $\ga
\Delta_j \ga^{-1} \to \Theta_j$. It follows that every cusp
cross-section of $\Hy^n / \Ga(\mathfrak p)$ is a flat torus. This
proves the second part of the theorem.

To complete the proof of Proposition \ref{prop:torus-cusps}, it
suffices to show that if $M$ is a noncompact hyperbolic $n$-manifold
with $k$ ends, then $M$ has a finite sheeted covering $M'$ with
strictly more than $k$ ends. We recall the following elementary fact
from covering space theory. Let $\rho\colon \Ga \to Q$ be a homomorphism
of $\Ga$ onto a finite group $Q$ and $\Ga_\rho$ be the kernel of
$\rho$. If $\Delta_j$ is a peripheral subgroup of $\Ga$, then the
number of ends of $\Hy^n / \Ga_\rho$ covering the associated end of
$\Hy^n / \Ga$ equals the index $[Q : \rho(\Delta_j)]$ of
$\rho(\Delta_j)$ in $Q$. Therefore, it suffices to find a finite
quotient $Q$ of $\Ga$ and a peripheral subgroup $\Delta_j$ of $\Ga$
that $\rho(\Delta_j)$ is a proper subgroup of $Q$.

In our setting, the proof is elementary. From above, we
can pass to a finite sheeted covering of $M$, for which all the cusp cross-sections are tori,
i.e., all peripheral subgroups are abelian. It follows that for
$\rho|_{\Delta_j}$ to be onto, $\rho(\Ga)$ must be abelian. However,
it is well-known that the above reduction quotients $\Ga /
\Ga(\mathfrak p)$ are central extensions of non-abelian finite simple
groups for all but finitely many prime ideals $\mathfrak p$
\cite[Chapter 6]{Lubotzky--Segal}. The theorem follows.
\end{proof}

\begin{rem}
 Constructing examples with a small number of ends is much more difficult. For example, there are no known one-cusped hyperbolic $n$-orbifolds for $n > 11$. Furthermore, it is shown in \cite{Stover} that for every $d$, there is a constant $c_d$ such that $d$-cusped \emph{arithmetic} hyperbolic $n$-orbifolds do not exist for $n > c_d$. For example, in the case $d = 1$, there are no $1$-cusped arithmetic hyperbolic $n$-orbifolds for any $n \geq 30$.
\end{rem}




\begin{thebibliography}{99}

\bibitem{Be} N.~Bergeron, \emph{Premier nombre de Betti et spectre du
laplacien de certaines vari\'et\'es hyperboliques}, Enseign. Math. {\bf 46}
(2000), 109--137.

\bibitem{BHW}
N.~Bergeron, F.~Haglund, and D.~Wise, \emph{Hyperplane sections in arithmetic hyperbolic manifolds}, J.~Lond.~Math.~Soc.~(2) \textbf{83} (2011), 431--448.

\bibitem{FLS} R.~Frigerio, J.-F.~Lafont, and A.~Sisto,
  \textsl{Rigidity of high dimensional graph manifolds}, arXiv:1107.2019.

\bibitem{HW} F.~Haglund, and D.~Wise, \emph{Special cube complexes},
Geom. Funct. Anal.  {\bf 17} (2008), 1551--1620.

\bibitem{Long--Reid}
D.~D.~Long and A.~W.~Reid, \textsl{Surface Subgroups and Subgroup Separability in 3-Manifold Topology}, Publicacoes Matem\'aticas do IMPA, (2005).

\bibitem{LRRetract}
D.~D.~Long and A.~W.~Reid, \textsl{Subgroup separability and virtual retractions of groups}, Topology \textbf{47} (2008), 137--159.

\bibitem{Lubotzky--Segal}
A. Lubotzky and D. Segal, \textsl{Subgroup Growth}, Progress in Mathematics \textbf{212}, Birkh\"auser, 2003.

\bibitem{Maclachlan--Reid}
C. Maclachlan and A. W.~Reid, \textsl{The Arithmetic of Hyperbolic 3-Manifolds}, Graduate Texts in Mathematics \textbf{219}, Springer-Verlag, 2003.

\bibitem{Raghunathan}
M.~S.~Raghunathan, \textsl{Discrete subgroups of Lie Groups}, Ergebnisse der Mathematik und ihrer Grenzgebiete, Band 68, Springer-Verlag, 1972.

\bibitem{Ratcliffe}
J. G.~Ratcliffe, \textsl{Foundations of Hyperbolic Manifolds}, Graduate Texts in Mathematics \textbf{149}, Springer, 2006.

\bibitem{Sc} G. P. Scott, {\em Subgroups of surface groups are almost
geometric}, J. London Math. Soc. {\bf 17} (1978), 555--565. See
also {\em ibid Correction:} J. London Math. Soc. {\bf 32} (1985), 217--220.

\bibitem{Stover} M.~Stover, \emph{On the number of ends of rank one locally symmetric spaces}, arXiv:1112.4495.

\end{thebibliography}
\end{document}